\documentclass[english]{article}
\usepackage{lmodern}

\usepackage[T1]{fontenc}
\usepackage[latin9]{inputenc}
\usepackage{amsmath}
\usepackage{amsthm}

\makeatletter
\theoremstyle{plain}
\newtheorem{thm}{\protect\theoremname}
\theoremstyle{plain}
\newtheorem{lem}[thm]{\protect\lemmaname}
\ifx\proof\undefined
\newenvironment{proof}[1][\protect\proofname]{\par
	\normalfont\topsep6\p@\@plus6\p@\relax
	\trivlist
	\itemindent\parindent
	\item[\hskip\labelsep\scshape #1]\ignorespaces
}{%
	\endtrivlist\@endpefalse
}
\providecommand{\proofname}{Proof}
\fi
\theoremstyle{plain}
\newtheorem{cor}[thm]{\protect\corollaryname}
\newcommand{\lyxaddress}[1]{
	\par {\raggedright #1
	\vspace{1.4em}
	\noindent\par}
}

\@ifundefined{date}{}{\date{}}
\makeatother

\usepackage{babel}
\providecommand{\corollaryname}{Corollary}
\providecommand{\lemmaname}{Lemma}
\providecommand{\theoremname}{Theorem}

\begin{document}
\title{Gap theorems in Yang-Mills theory for complete four-dimensional manifolds
with positive Yamabe constant}
\author{Matheus Vieira}
\maketitle
\begin{abstract}
In this paper we prove gap theorems in Yang-Mills theory for complete
four-dimensional manifolds with positive Yamabe constant. We extend
the results of Gursky-Kelleher-Streets to complete manifolds. We also
describe the equality in the gap theorem in terms of the basic instanton,
which is interesting even for compact manifolds.
\end{abstract}

\section{Introduction}

Consider a complete four-dimensional Riemannian manifold $X$, a Riemannian
vector bundle $V$ on the manifold $X$ and a metric connection $A$
on the bundle $V$ with curvature $F$. We define the Yang-Mills energy
of the connection $A$ by
\[
YM\left(A\right)=\int_{X}\left|F\right|^{2}.
\]
We say that the connection $A$ is Yang-Mills if the divergence of
the curvature $F$ is zero ($d^{*}F=0$), which is the Euler-Lagrange
equation of the Yang-Mills functional $YM$. We say that the connection
$A$ is an instanton if the curvature $F$ is anti-self-dual ($F^{+}=0$)
or self-dual ($F^{-}=0$). Here $F^{\pm}=1/2\left(F\pm*F\right)$.
Instantons are an important class of Yang-Mills connections. In \cite{AHDM1978}
and \cite{BPST1975} Atiyah-Hitchin-Drinfeld-Manin and Belavin-Polyakov-Schwartz-Tyupkin
found an explicit $SU\left(2\right)$ instanton, known as the basic
instanton. In \cite{SSU1989} Sibner-Sibner-Uhlenbeck proved the existence
of $SU\left(2\right)$ Yang-Mills connections which are not instantons
(see also Bor \cite{B1992}, Parker \cite{P1992}, Sadun-Segert \cite{SS1992}).
We remark that for bigger groups such as $SU\left(4\right)$ we can
easily find Yang-Mills connections which are not instantons (see for
example Section 2.1 in \cite{VN2008}).

In \cite{BLS1979} and \cite{BL1981} Bourguignon-Lawson-Simons proved
the following $L^{\infty}$ gap theorem: if a Yang-Mills connection
with curvature $F$ on the sphere $S^{4}$ satisfies $F^{\pm}\neq0$,
then the $L^{\infty}$ norm of $F^{\pm}$ is bounded below by an explicit
constant (see also Shen \cite{S1982}). In \cite{M1982} MinOo proved
the following $L^{2}$ gap theorem: if a Yang-Mills connection with
curvature $F$ on the sphere $S^{4}$ satisfies $F^{\pm}\neq0$, then
the $L^{2}$ norm of $F^{\pm}$ is bounded below by an explicit constant
(see also Dodziuk-MinOo \cite{DM1982}, Parker \cite{P1982}). In
\cite{F2016} Feehan proved a $L^{2}$ gap of $F^{\pm}$ for compact
manifolds with a good metric (the extension of this result to general
metrics is an open problem). In \cite{GKS2018} Gursky-Kelleher-Streets
proved a sharp $L^{2}$ gap of $F^{\pm}$ for compact manifolds with
positive Yamabe constant. In \cite{V2019} we proved a $L^{\infty}$
type gap of $F^{\pm}$ for complete manifolds with a weighted Poincare
inequality. For higher-dimensional manifolds gap theorems for the
curvature $F$ were proved by Price \cite{P1983}, Gerhardt \cite{G2010},
Zhou \cite{Z2015}-\cite{Z2016}, Feehan \cite{F2017}-\cite{F2024},
among others. For four-dimensional manifolds gap theorems for the
self-dual and anti-self-dual parts of the curvature $F$, that is
$F^{+}$ and $F^{-}$, are more interesting.

In the next result we extend a gap theorem of Gursky-Kelleher-Streets
\cite{GKS2018} from compact manifolds to complete manifolds with
at most Euclidean volume growth. We denote the scalar curvature and
Weyl curvature by $S$ and $W$, respectively, and we denote $W^{\pm}=1/2\left(W\pm*W\right)$.
In the same way as Schoen-Yau \cite{SY1988}, we define the Yamabe
constant $C_{Y}$ of a complete (compact or noncompact) four-dimensional
manifold $X$ by
\[
C_{Y}=\inf_{\phi\in C_{c}^{\infty}\left(X\right)}\frac{6\int_{X}\left|\nabla\phi\right|^{2}+\int_{X}S\phi^{2}}{\left(\int_{X}\phi^{4}\right)^{1/2}}.
\]
We denote by $\gamma$ a constant depending only on $G$ (see Lemma
\ref{lem:bochner}).
\begin{thm}
\label{thm:inequality}Consider a complete four-dimensional Riemannian
manifold $X$ with volume growth $vol\left(B_{R}\right)\leq const\cdot R^{4}$
and positive Yamabe constant $C_{Y}>0$. Consider a Yang-Mills connection
$A$ with curvature $F$ and structure group $G\subset O\left(N\right)$
on the manifold $X$, where $N\geq3$. If $F^{+}\neq0$, then
\[
3\gamma\left|F^{+}\right|_{L^{2}}+2\sqrt{6}\left|W^{+}\right|_{L^{2}}\geq C_{Y}.
\]
The same statement is true replacing $F^{+}$ and $W^{+}$ by $F^{-}$
and $W^{-}$, respectively.
\end{thm}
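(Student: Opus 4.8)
The plan is to package the Weitzenb\"ock formula for $|F^+|$ into a differential inequality for $\phi:=|F^+|^{1/2}$ involving the conformal Laplacian, and then to feed $\phi$ into the definition of $C_Y$. One first reduces to the case $|F^+|_{L^2}<\infty$ and $|W^+|_{L^2}<\infty$, since otherwise the asserted inequality holds trivially. A short computation shows that, for a Yang-Mills connection, the Bianchi identity $d_AF=0$ together with $d_A^*F=0$ force $F^+=\tfrac12(F+*F)$ (and likewise $F^-$) to be $d_A$-harmonic: $d_AF^+=0=d_A^*F^+$. Hence the Weitzenb\"ock formula for adjoint-bundle-valued two-forms applies to $F^+$; combined with the refined Kato inequality $|\nabla_AF^+|^2\ge\tfrac32\,|\nabla|F^+||^2$ for $d_A$-harmonic self-dual two-forms in dimension four, the sharp bound $\langle W^+(F^+),F^+\rangle\ge-\tfrac{\sqrt6}{3}\,|W^+|\,|F^+|^2$ (coming from the fact that $W^+$ acts as a trace-free symmetric endomorphism of the rank-three bundle $\Lambda^2_+$), and the estimate $|\langle\mathcal F(F^+),F^+\rangle|\le\gamma\,|F^+|^3$ for the term built from the curvature of $A$ (with $\gamma$ depending only on $G\subset O(N)$), it yields Lemma~\ref{lem:bochner}. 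The constant $\tfrac32$ is precisely the value for which, after the substitution $\phi=|F^+|^{1/2}$, all gradient terms cancel, so these inputs assemble into the pointwise inequality
\begin{equation*}
\phi\,L_g\phi\ \le\ 2\sqrt6\,|W^+|\,\phi^2\ +\ 3\gamma\,\phi^4,\qquad L_g:=-6\Delta+S,
\end{equation*}
valid weakly on $X$ after a routine regularization of $\phi$ near the zero set of $F^+$. Note that $\phi^4=|F^+|^2$ is the conformally invariant Yang-Mills density, and this homogeneity is what singles out $\phi$ as the natural test function.

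For a compact $X$ the conclusion is now immediate: integrating the displayed inequality gives $6\int_X|\nabla\phi|^2+\int_X S\phi^2\le 2\sqrt6\int_X|W^+|\,|F^+|+3\gamma\int_X|F^+|^2$; by Cauchy--Schwarz the first term on the right is at most $2\sqrt6\,|W^+|_{L^2}|F^+|_{L^2}$, the second equals $3\gamma\,|F^+|_{L^2}^2$, while the left-hand side is at least $C_Y\big(\int_X\phi^4\big)^{1/2}=C_Y\,|F^+|_{L^2}$ by the definition of $C_Y$; dividing by $|F^+|_{L^2}>0$ (positive because $F^+\ne 0$) gives the theorem. For a complete $X$ one repeats this with $\phi$ replaced by $\phi\chi_R$, where $\chi_R$ is a cutoff equal to $1$ on $B_R$, supported in $B_{2R}$, with $|\nabla\chi_R|\le C/R$. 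The Dirichlet term $\|\nabla(\phi\chi_R)\|_{L^2}^2$ is absorbed between the Weitzenb\"ock inequality and the Sobolev inequality implied by $C_Y>0$, so the only error introduced is $6\int_X|F^+|\,|\nabla\chi_R|^2$; here the hypothesis $vol(B_R)\le\mathrm{const}\cdot R^4$ is essential, for by Cauchy--Schwarz on the annulus $B_{2R}\setminus B_R$,
\begin{equation*}
\int_X|F^+|\,|\nabla\chi_R|^2\ \le\ \frac{C}{R^2}\Big(\int_{B_{2R}\setminus B_R}|F^+|^2\Big)^{1/2}\,vol(B_{2R})^{1/2}\ \le\ C'\Big(\int_{B_{2R}\setminus B_R}|F^+|^2\Big)^{1/2},
\end{equation*}
which tends to $0$ as $R\to\infty$ since $|F^+|\in L^2(X)$. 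Letting $R\to\infty$ (using monotone convergence for $\int_X(\phi\chi_R)^4\to\int_X|F^+|^2$) then yields $C_Y\,|F^+|_{L^2}\le\big(2\sqrt6\,|W^+|_{L^2}+3\gamma\,|F^+|_{L^2}\big)|F^+|_{L^2}$, and dividing gives the stated inequality. The statement for $F^-$ and $W^-$ follows verbatim with $F^-$, $W^-$ in place of $F^+$, $W^+$ (equivalently, by reversing the orientation of $X$, which interchanges $F^\pm$ and $W^\pm$ and leaves $C_Y$ unchanged).

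The step I expect to require the most care is Lemma~\ref{lem:bochner}, on which everything rests: it must deliver the Weitzenb\"ock inequality with the exact coefficients, which means (i) identifying the refined Kato constant as precisely $\tfrac32$ and checking that it survives the passage to adjoint-bundle-valued forms; (ii) the sharp Weyl-eigenvalue estimate with the correct normalization of $|W^+|$; and, most substantively, (iii) bounding the term coming from the curvature of $A$ by $\gamma|F^+|^3$ with $\gamma$ depending only on $G$, which requires showing that only the self-dual part of that curvature contributes to its contraction with the self-dual form $F^+$. On the analytic side, the one genuinely new point relative to the compact theorem of Gursky--Kelleher--Streets is the cutoff estimate above, and this is exactly where the Euclidean volume-growth hypothesis enters and cannot be dropped. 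Finally, for the equality discussion one traces the chain backwards: equality forces equality in the refined Kato inequality, so $\nabla_AF^+$ is a pure twistor section; equality in the Weyl estimate, so that at each point where $F^+\ne 0$ the operator $W^+$ has a repeated eigenvalue whose eigenspace contains $F^+$; and $\phi=|F^+|^{1/2}$ must be an extremal for $C_Y$ — together these constraints pin down the connection and identify the extremal configuration with the basic instanton.
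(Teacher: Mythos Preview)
Your proof is correct and follows essentially the same route as the paper: you reduce to $F^{+},W^{+}\in L^{2}$, apply Lemma~\ref{lem:bochner} with $p=1/2$ (which is exactly your displayed inequality $\phi L_g\phi\le 2\sqrt{6}|W^{+}|\phi^{2}+3\gamma\phi^{4}$ rewritten), test the Yamabe quotient with $|F^{+}|^{1/2}$ times a cutoff, and kill the error $\int|F^{+}||\nabla\chi_R|^{2}$ via H\"older and the Euclidean volume-growth hypothesis. The only difference is packaging---the paper encodes the Bochner inequality as the nonnegativity of an auxiliary function $I$ and integrates $I\cdot(\text{cutoff})^{2}$, which is algebraically identical to your conformal-Laplacian formulation; your closing remarks on the equality case anticipate Theorem~\ref{thm:equality} rather than the present statement.
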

In the next result we study the case of equality in the above inequality.
In this case, by Lemma \ref{lem:bochner} and the proof of Theorem
\ref{thm:equality}, we see that anti-self-dual Weyl curvature is
a natural assumption. In \cite{GKS2018} Gursky-Kelleher-Streets proved
that if the manifold is compact and equality holds, then $F^{+}$
is parallel in a certain conformal metric. We prove that if the manifold
is complete (compact or noncompact) and equality holds, then in a
certain way $F^{+}$ is a multiple of the basic instanton. This is
interesting even for compact manifolds.
\begin{thm}
\label{thm:equality}Consider a complete four-dimensional Riemannian
manifold $X$ with volume growth $vol\left(B_{R}\right)\leq const\cdot R^{4}$,
positive Yamabe constant $C_{Y}>0$ and anti-self-dual Weyl curvature
($W^{+}=0$). Consider a Yang-Mills connection $A$ with curvature
$F$ and structure group $G\subset O\left(N\right)$ on the manifold
$X$, where $N\geq4$. If
\[
3\gamma\left|F^{+}\right|_{L^{2}}=C_{Y},
\]
then at each point, in normal coordinates, the triple $F_{12}^{+}$,
$F_{13}^{+}$, $F_{14}^{+}$ is simultaneously orthogonally equivalent
to a triple $a_{1}\boldsymbol{i}$, $a_{2}\boldsymbol{j}$, $a_{3}\boldsymbol{k}$
of multiples of a basis of $su\left(2\right)$ embedded into $so\left(N\right)$,
and the absolute values of $a_{1}$, $a_{2}$, $a_{3}$ are equal.
The same statement is true replacing $F^{+}$ and $W^{+}$ by $F^{-}$
and $W^{-}$, respectively.
\end{thm}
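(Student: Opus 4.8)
The plan is to revisit the proof of Theorem~\ref{thm:inequality} and record which of its inequalities are forced to equalities. That argument combines: (i) the Bochner formula of Lemma~\ref{lem:bochner} for the bundle-valued self-dual $2$-form $F^+$ --- from $d^{*}F=0$ and the Bianchi identity one has $d_AF^{+}=d_A^{*}F^{+}=0$, and in the bundle-curvature zeroth-order term the contribution of $F^-$ drops out because $\Lambda^-\perp\Lambda^+$, leaving a term built only from $F^+$; (ii) a refined Kato inequality for such forms; (iii) the pointwise algebraic bound of Lemma~\ref{lem:bochner} involving $\gamma$; and (iv) insertion of $|F^+|$ into the Yamabe quotient followed by Cauchy-Schwarz, $\int|F^+|^3\leq|F^+|_{L^2}\,|F^+|_{L^4}^2$, together with a cutoff argument (using $vol(B_R)\leq const\cdot R^4$) to justify integrating the Bochner identity. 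Because $W^+=0$ by hypothesis the Weyl terms are vacuous, so equality in $3\gamma|F^+|_{L^2}=C_Y$ forces: the refined Kato inequality holds with equality everywhere; the Cauchy-Schwarz step is an equality, whence $|F^+|$ is a positive constant on $X$; and --- the decisive point --- the pointwise bound (iii) is an equality at every point $p$.

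It then remains to turn (iii)-equality into the asserted normal form. In normal coordinates at $p$, self-duality recovers $F^+$ from the triple $(X_1,X_2,X_3):=(F^+_{12},F^+_{13},F^+_{14})$, whose entries lie in $\mathrm{Lie}(G)\subset so(N)$ with its $\mathrm{ad}$-invariant inner product; using $\mathrm{ad}$-invariance and the total antisymmetry of the Cartan three-form $(X,Y,Z)\mapsto\langle[X,Y],Z\rangle$, the zeroth-order bundle-curvature term of Lemma~\ref{lem:bochner} collapses to a single scalar multiple of $\langle[X_1,X_2],X_3\rangle$, and $\gamma$ is, up to this normalization, the optimal constant; thus equality forces $(X_1,X_2,X_3)$ to maximize
\[
\Phi(X_1,X_2,X_3)=\frac{\bigl|\langle[X_1,X_2],X_3\rangle\bigr|}{\bigl(|X_1|^2+|X_2|^2+|X_3|^2\bigr)^{3/2}}.
\]
At a maximizer all $X_i\neq0$. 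Since the gradient of $\langle[X_1,X_2],X_3\rangle$ in the first slot is $[X_2,X_3]$ (and cyclically, by $\mathrm{ad}$-invariance) while the gradients of $|X_1|^2+|X_2|^2+|X_3|^2$ are $2X_1$, $2X_2$, $2X_3$, the Lagrange condition reads $[X_2,X_3]=\lambda X_1$, $[X_3,X_1]=\lambda X_2$, $[X_1,X_2]=\lambda X_3$ with one common multiplier $\lambda\neq0$. Pairing these against the $X_i$ and using antisymmetry of the triple product gives $\langle X_i,X_j\rangle=0$ for $i\neq j$ and $|X_1|=|X_2|=|X_3|$; hence $\{X_i/|X_i|\}$ is an orthonormal basis $(\boldsymbol{i},\boldsymbol{j},\boldsymbol{k})$ of an $su(2)$ subalgebra of $so(N)$ contained in $\mathrm{Lie}(G)$, and $(X_1,X_2,X_3)=(a_1\boldsymbol{i},a_2\boldsymbol{j},a_3\boldsymbol{k})$ with $|a_1|=|a_2|=|a_3|$, which is the required normal form --- the sign and labelling ambiguities being precisely what ``orthogonally equivalent'' permits. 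Carrying this out at each $p$ proves the theorem; the statement for $F^-$, $W^-$ follows by reversing orientation, which interchanges $\Lambda^+$ and $\Lambda^-$. (Equality additionally forces the constant $\gamma$ itself to be attained pointwise, which for $N\geq4$ identifies the $su(2)$ with the self-dual $\Lambda^+\subset so(4)\subset so(N)$; this is where the hypothesis $N\geq4$, rather than $N\geq3$, enters.)

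The step I expect to be the main obstacle is the bookkeeping behind (i)--(iii): checking that the bundle-curvature zeroth-order term in the Bochner formula for $F^+$ really does reduce, after invoking self-duality and $\mathrm{ad}$-invariance, to a single scalar multiple of $\langle[F^+_{12},F^+_{13}],F^+_{14}\rangle$, with a constant matched to the normalization of $\gamma$ in Lemma~\ref{lem:bochner}, and confirming that integrated equality in the gap inequality does propagate to pointwise equality in (iii) --- the latter being immediate from continuity of $F$, which is smooth by elliptic regularity for Yang-Mills connections. By contrast the Lagrange-multiplier step --- critical triples of the Cartan three-form under the norm constraint are precisely scaled orthonormal $su(2)$-triples --- is the clean algebraic heart of the argument.
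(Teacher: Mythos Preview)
Your overall plan --- force pointwise equality in the algebraic bound and then read off the $su(2)$ normal form --- is right, and your Lagrange-multiplier derivation of the normal form is a clean alternative to the paper's route via the equality case of Bourguignon--Lawson's commutator inequality. But the mechanism you propose for getting from integrated equality to pointwise equality is wrong in two linked ways.

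First, the test function in step (iv) must be $|F^{+}|^{1/2}$, not $|F^{+}|$. The exponent $p=1/2$ is what makes the coefficient $1-1/(2p)$ of the Kato-improved gradient term in Lemma~\ref{lem:bochner} vanish, so that after integrating by parts the gradient term from Bochner exactly matches the gradient term in the Yamabe quotient. With $|F^{+}|$ you would be left with an uncontrolled $-\tfrac{1}{3}\int S|F^{+}|^{2}$ term.

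Second, and more seriously, once you use $|F^{+}|^{1/2}$ there is \emph{no} Cauchy--Schwarz step of the form $\int|F^{+}|^{3}\le|F^{+}|_{L^{2}}|F^{+}|_{L^{4}}^{2}$ in the chain; the combination of Bochner and Yamabe produces $C_{Y}|F^{+}|_{L^{2}}\le 3\gamma|F^{+}|_{L^{2}}^{2}$ directly. So equality in the gap does \emph{not} force $|F^{+}|$ to be constant --- and indeed it cannot, since the basic instanton on $\mathbf{R}^{4}$ realizes equality (Section~2) while its curvature norm is $2(1+|x|^{2})^{-2}$. The correct argument is the one in the paper: package the pointwise Bochner excess as the nonnegative function
\[
I=|F^{+}|^{1/2}\Delta|F^{+}|^{1/2}-\tfrac{1}{6}S|F^{+}|+\tfrac{\gamma}{2}|F^{+}|^{2}\ge 0,
\]
run the cutoff argument to obtain $\int I\phi^{2}+\bigl(\tfrac{C_{Y}}{6}-\tfrac{\gamma}{2}|F^{+}|_{L^{2}}\bigr)\bigl(\int|F^{+}|^{2}\phi^{4}\bigr)^{1/2}\le o(1)$, observe that the hypothesis kills the second term, and let $R\to\infty$ to conclude $I\equiv 0$. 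This is what gives pointwise equality in the algebraic bound at every point with $|F^{+}|>0$; your Lagrange analysis then finishes the proof. (One further check your sketch leaves implicit: the Lagrange conditions single out \emph{all} $su(2)$-triples as critical, but only those conjugate into $so(4)\subset so(N)$ attain the value $\gamma$ for $N\ge 4$, since the $so(3)$-type embedding gives the strictly smaller constant $4/\sqrt{12c}$.)
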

The extension of Theorem \ref{thm:equality} from compact manifolds
to complete manifolds requires a new approach. The proof in \cite{GKS2018}
is based on solving a modified Yamabe problem on compact manifolds,
which is not possible for general complete manifolds. Our proof is
based on a sharp algebraic inequality (Lemma \ref{lem:linearalgebra})
and a sharp Bochner type formula (Lemma \ref{lem:bochner}).

The paper is organized as follows. In Section 2 we check that Theorem
\ref{thm:equality} is sharp in the basic instanton. In Section 3
we prove an algebraic inequality. In Section 4 we prove a Bochner
type formula. In Section 5 we prove Theorem \ref{thm:inequality}
and Theorem \ref{thm:equality} and we apply these results to the
sphere $S^{4}$, the Euclidean space $R^{4}$ and the cylinder $S^{3}\times R$.

\section{Basic instanton}

In this section we check that Theorem \ref{thm:equality} is sharp
in the basic instanton.

First we introduce a basis of $su\left(2\right)$ embedded into $so\left(N\right)$,
where $N\geq4$. Take matrices $\boldsymbol{i}$, $\boldsymbol{j}$,
$\boldsymbol{k}$ in $so\left(4\right)$ satisfying the quaternion
multiplication table and identify $\boldsymbol{i}$, $\boldsymbol{j}$,
$\boldsymbol{k}$ with the matrices
\[
diag\left(\boldsymbol{i},0_{N-4}\right),\,\,\,\,\,diag\left(\boldsymbol{j},0_{N-4}\right),\,\,\,\,\,diag\left(\boldsymbol{k},0_{N-4}\right),
\]
respectively. We see that $\boldsymbol{i}$, $\boldsymbol{j}$, $\boldsymbol{k}$
are matrices in $so\left(N\right)$ satisfying a quaternion type multiplication
table:
\[
\boldsymbol{i}^{2}=\boldsymbol{j}^{2}=\boldsymbol{k}^{2}=\boldsymbol{i}\boldsymbol{j}\boldsymbol{k}=-diag\left(I_{4},0_{N-4}\right).
\]
This is a basis of $su\left(2\right)$ embedded into $so\left(N\right)$.
We define the inner product of $so\left(N\right)$ as follows
\[
\left\langle A,B\right\rangle _{so\left(N\right)}=-c\cdot tr\left(AB\right),\,\,\,\,\,c>0.
\]
For example, for a basis $\boldsymbol{i}$, $\boldsymbol{j}$, $\boldsymbol{k}$
of $su\left(2\right)$ embedded into $so\left(N\right)$ we see that
each basis vector has norm $2\sqrt{c}$. We remark that the value
of the constant $c$ varies in the gauge theory literature.

Next we write the curvature of the basic instanton in a basis $\boldsymbol{i}$,
$\boldsymbol{j}$, $\boldsymbol{k}$ of $su\left(2\right)$. We follow
Section 3.4.1 in Donaldson-Kronheimer \cite{DK1997} (see also \cite{AHDM1978},
\cite{BPST1975}). Take the connection $A$ on the Euclidean space
$R^{4}$ given by
\[
A=\left(1+\left|x\right|^{2}\right)^{-1}\left(\theta_{1}\boldsymbol{i}+\theta_{2}\boldsymbol{j}+\theta_{3}\boldsymbol{k}\right),
\]
where
\[
\theta_{1}=x_{1}dx_{2}-x_{2}dx_{1}-x_{3}dx_{4}+x_{4}dx_{3},
\]
\[
\theta_{2}=x_{1}dx_{3}-x_{3}dx_{1}+x_{2}dx_{4}-x_{4}dx_{2},
\]
\[
\theta_{3}=x_{1}dx_{4}-x_{4}dx_{1}-x_{2}dx_{3}+x_{3}dx_{2}.
\]
By a classic calculation, we see that the curvature $F$ of this connection
is
\[
F=2\left(1+\left|x\right|^{2}\right)^{-2}\left\{ \left(dx_{12}-dx_{34}\right)\boldsymbol{i}+\left(dx_{13}+dx_{24}\right)\boldsymbol{j}+\left(dx_{14}-dx_{23}\right)\boldsymbol{k}\right\} ,
\]
where $dx_{ij}=dx_{i}\wedge dx_{j}$. We see that the curvature $F$
is anti-self-dual, so we conclude that the connection $A$ is an instanton,
known as the basic instanton.

Finally we check that Theorem \ref{thm:equality} is sharp in the
basic instanton. The theorem is sharp in the integral sense, that
is $3\gamma\left|F\right|_{L^{2}}=C_{Y}$ (see \cite{GKS2018} using
the inner products of Lemma \ref{lem:bochner}). The theorem is also
sharp in the pointwise sense because at each point $x$ the triple
$F_{12}$, $F_{13}$, $F_{14}$ is equal to a triple $a\boldsymbol{i}$,
$a\boldsymbol{j}$, $a\boldsymbol{k}$ of multiples of a basis of
$su\left(2\right)$, where $a=2\left(1+\left|x\right|^{2}\right)^{-2}$.

\section{Algebraic inequality}

In this section we prove an algebraic inequality. Part of the proof
is inspired by Bourguignon-Lawson (Proposition 5.6 in \cite{BL1981}).
The description of equality in the next result is very important in
the proof of Theorem \ref{thm:equality}.
\begin{lem}
\label{lem:linearalgebra}Consider a two-form $\omega$ on $R^{4}$
with values in $so\left(N\right)$, where $N\geq3$. Suppose that
the form $\omega$ is self-dual ($*\omega=\omega$) or anti-self-dual
($*\omega=-\omega$). Then
\[
\left|\sum_{i,j,k=1}^{4}\left\langle \left[\omega_{ij},\omega_{jk}\right],\omega_{ki}\right\rangle \right|\leq\gamma\left|\omega\right|^{3}.
\]
Also, if $\omega\neq0$ and equality holds (in the above inequality),
then:

(i) The $\omega_{ij}$ ($i\neq j$) have the same norm, namely $\left|\omega_{ij}\right|=\left(1/\sqrt{6}\right)\left|\omega\right|$.

(ii) For $N=3$ the triple $\omega_{12}$, $\omega_{13}$, $\omega_{14}$
is an orthogonal basis of $so\left(3\right)$.

(iii) For $N\geq4$ there exist constants $a_{1}$, $a_{2}$, $a_{3}$
and a basis $\boldsymbol{i}$, $\boldsymbol{j}$, $\boldsymbol{k}$
of $su\left(2\right)$ embedded into $so\left(N\right)$ (see Section
2) such that the triple $\omega_{12}$, $\omega_{13}$, $\omega_{14}$
is simultaneously orthogonally equivalent to the triple $a_{1}\boldsymbol{i}$,
$a_{2}\boldsymbol{j}$, $a_{3}\boldsymbol{k}$.

Here we use the inner products
\[
\left|\omega\right|^{2}=\sum_{1\leq i<j\leq4}\left|\omega_{ij}\right|^{2},\,\,\,\,\,\left\langle A,B\right\rangle _{so\left(N\right)}=-c\cdot tr\left(AB\right),\,\,\,\,\,c>0,
\]
and we denote
\[
\gamma=\begin{cases}
4/\sqrt{12c}, & N=3,\\
4/\sqrt{6c}, & N\geq4.
\end{cases}
\]
\end{lem}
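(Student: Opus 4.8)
The plan is to exploit the self-duality (or anti-self-duality) of $\omega$ to reduce the cubic sum to an expression involving only three of the $\omega_{ij}$'s, and then to bound that expression by elementary estimates on commutators and Cauchy--Schwarz. Suppose first that $\omega$ is anti-self-dual, so that $\omega_{34}=-\omega_{12}$, $\omega_{24}=\omega_{13}$, $\omega_{23}=-\omega_{14}$. Writing $X=\omega_{12}$, $Y=\omega_{13}$, $Z=\omega_{14}$ (elements of $so(N)$), I would substitute these relations into $\sum_{i,j,k}\langle[\omega_{ij},\omega_{jk}],\omega_{ki}\rangle$ and collect terms. Because $\langle[\cdot,\cdot],\cdot\rangle$ is totally antisymmetric (the Killing-type form $-c\,\mathrm{tr}$ is $\mathrm{ad}$-invariant), all six assignments of $(i,j,k)$ to a fixed unordered triple contribute equally, and the six $2$-planes pair up under the anti-self-duality identification; the net effect is that the whole sum collapses to a constant multiple of $\langle[X,Y],Z\rangle$. (I expect the constant to be exactly what makes the final $\gamma$ come out right, i.e. essentially $12$ after accounting for the $3!$ orderings and the doubling from the paired planes.) The self-dual case is identical with sign changes that do not affect the absolute value.

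Next I would estimate $|\langle[X,Y],Z\rangle|$. Using $|\langle[X,Y],Z\rangle|=|c\,\mathrm{tr}([X,Y]Z)|$ and the bound $|[X,Y]|\le \kappa|X|\,|Y|$ for a constant $\kappa$ depending on the group (this is where the distinction $N=3$ versus $N\ge 4$ enters: for $so(3)\cong su(2)$ the structure constants are fixed, while for a general embedding into $so(N)$ one uses the smallest such $\kappa$), together with Cauchy--Schwarz $|\mathrm{tr}([X,Y]Z)|\le \|[X,Y]\|\,\|Z\|$, gives $|\langle[X,Y],Z\rangle|\le \kappa'|X|\,|Y|\,|Z|$. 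Then by AM--GM, $|X|\,|Y|\,|Z|\le \bigl((|X|^2+|Y|^2+|Z|^2)/3\bigr)^{3/2}$. Since $|\omega|^2=\sum_{i<j}|\omega_{ij}|^2=2(|X|^2+|Y|^2+|Z|^2)$ by the duality relations, this yields $|X|\,|Y|\,|Z|\le (|\omega|^2/6)^{3/2}$, and assembling the constants produces exactly the claimed $\gamma|\omega|^3$.

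For the equality discussion, I would trace back through each inequality. Equality in AM--GM forces $|X|=|Y|=|Z|$, hence $|\omega_{ij}|=(1/\sqrt 6)|\omega|$ for all $i<j$, giving (i). Equality in Cauchy--Schwarz forces $[X,Y]$ to be a scalar multiple of $Z$ (and cyclically, after noting the same argument applies to the other orderings), and equality in $|[X,Y]|\le\kappa|X|\,|Y|$ forces $X,Y$ to be orthogonal and to generate a copy of $su(2)$ with the extremal structure constant. For $N=3$ this means $X,Y,Z$ span all of $so(3)$ orthogonally, i.e. (ii). For $N\ge 4$, the relations $[X,Y]\parallel Z$, $[Y,Z]\parallel X$, $[Z,X]\parallel Y$ with $X,Y,Z$ mutually orthogonal of equal norm say precisely that the span is a subalgebra isomorphic to $su(2)$ sitting inside $so(N)$, and after an orthogonal change of basis of that subalgebra we can normalize $X,Y,Z$ to be $a_1\boldsymbol i,a_2\boldsymbol j,a_3\boldsymbol k$, giving (iii).

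The main obstacle I anticipate is the bookkeeping in the first step: correctly reducing the $64$-term sum over $(i,j,k)$ to a single commutator term with the right numerical constant, being careful that the antisymmetry of $\langle[\cdot,\cdot],\cdot\rangle$ in all three slots is used correctly and that the cross-terms (e.g. $\langle[\omega_{12},\omega_{23}],\omega_{31}\rangle$ versus $\langle[\omega_{12},\omega_{24}],\omega_{41}\rangle$) are combined with the right signs under the duality substitution. A secondary subtlety is pinning down the sharp commutator constant $\kappa$ for embeddings $su(2)\hookrightarrow so(N)$ and verifying it is attained exactly by the standard quaternionic generators of Section 2, which is what ultimately justifies the $N\ge 4$ value of $\gamma$ and the rigidity statement (iii).
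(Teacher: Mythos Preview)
Your approach is correct and essentially identical to the paper's: reduce via (anti-)self-duality and ad-invariance to a single triple product (the paper obtains $24\langle[\omega_{12},\omega_{23}],\omega_{31}\rangle$, so your guessed constant should be $24$, not $12$), then apply the sharp commutator bound, Cauchy--Schwarz, and AM--GM, tracing back through each for the equality case. The ``secondary subtlety'' you flag---the sharp constant $\kappa$ in $|[X,Y]|\le\kappa|X||Y|$ and its equality characterization---is exactly Lemma~2.30 and Note~2.31 of Bourguignon--Lawson \cite{BL1981}, which the paper invokes directly to get both the value of $\gamma$ and the rigidity statements (ii) and (iii).
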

\begin{proof}
We prove the result for self-dual forms. The proof for anti-self-dual
forms is similar.

Using the fact that the form $\omega$ is self-dual and the inner
product of $so\left(N\right)$ is ad-invariant, we have
\[
\sum_{i,j,k=1}^{4}\left\langle \left[\omega_{ij},\omega_{jk}\right],\omega_{ki}\right\rangle =24\left\langle \left[\omega_{12},\omega_{23}\right],\omega_{31}\right\rangle .
\]
Using an inequality of Bourguignon-Lawson (Lemma 2.30 and Note 2.31
in \cite{BL1981}) and noting that B-L use the inner product $\left\langle A,B\right\rangle _{so\left(N\right)}=-\left(1/2\right)tr\left(AB\right)$,
we have
\[
\left|\left[\omega_{12},\omega_{23}\right]\right|\leq\tilde{c}\left|\omega_{12}\right|\left|\omega_{23}\right|,
\]
where
\[
\tilde{c}=\begin{cases}
1/\sqrt{2c}, & N=3,\\
1/\sqrt{c}, & N\geq4.
\end{cases}
\]
Applying this inequality and the Cauchy-Schwarz inequality to the
above equation, we get
\begin{align*}
\left|\sum_{i,j,k=1}^{4}\left\langle \left[\omega_{ij},\omega_{jk}\right],\omega_{ki}\right\rangle \right| & \leq24\left|\left[\omega_{12},\omega_{23}\right]\right|\left|\omega_{31}\right|\\
 & \leq24\tilde{c}\left|\omega_{12}\right|\left|\omega_{23}\right|\left|\omega_{31}\right|.
\end{align*}
Using the inequality of arithmetic and geometric means, we have
\[
\left|\sum_{i,j,k=1}^{4}\left\langle \left[\omega_{ij},\omega_{jk}\right],\omega_{ki}\right\rangle \right|\leq\left(8\tilde{c}/\sqrt{3}\right)\left(\left|\omega_{12}\right|^{2}+\left|\omega_{23}\right|^{2}+\left|\omega_{31}\right|^{2}\right)^{3/2}.
\]
Using the fact that the form $\omega$ is self-dual and $\gamma=4\tilde{c}/\sqrt{6}$,
we conclude
\[
\left|\sum_{i,j,k=1}^{4}\left\langle \left[\omega_{ij},\omega_{jk}\right],\omega_{ki}\right\rangle \right|\leq\gamma\left|\omega\right|^{3}.
\]
This completes the first part of the proof.

Assume that $\omega\neq0$ and suppose that equality holds in the
above inequality.

First we prove Item (i). We claim that the $\omega_{ij}$ ($i\neq j$)
have the same norm and there exists a constant $t\neq0$ such that
$\left[\omega_{ij},\omega_{jk}\right]=t\omega_{ki}$ for cyclic indices
$i$, $j$, $k$ between $1$ and $3$. The proof is as follows. Since
equality holds in the above inequality, we see that the inequality
of arithmetic and geometric means (in the first part of the proof)
becomes an equality, so we see that the norms of $\omega_{12}$, $\omega_{23}$,
$\omega_{31}$ are equal. Using this and the fact that the form $\omega$
is self-dual, we have
\[
\left|\omega_{ij}\right|=\left(1/\sqrt{6}\right)\left|\omega\right|,\,\,\,\,\,i\neq j.
\]
Since equality holds in the above inequality, we see that the Cauchy-Schwarz
inequality and Bourguignon-Lawson's inequality (both in the first
part of the proof) become equalities, so we have
\[
\left|\left[\omega_{12},\omega_{23}\right]\right|=\tilde{c}\left|\omega_{12}\right|\left|\omega_{23}\right|,
\]
\[
\left[\omega_{12},\omega_{23}\right]\parallel\omega_{31}.
\]
Using the fact that $\left[\omega_{12},\omega_{23}\right]$ is a multiple
of $\omega_{31}$ and the above two equations, we have $\left[\omega_{12},\omega_{23}\right]=r\omega_{31}$,
where $r\in\left\{ \pm\left(\tilde{c}/\sqrt{6}\right)\left|\omega\right|\right\} $.
In the same way, we have $\left[\omega_{23},\omega_{31}\right]=s\omega_{12}$
and $\left[\omega_{31},\omega_{12}\right]=t\omega_{23}$, where $s,t\in\left\{ \pm\left(\tilde{c}/\sqrt{6}\right)\left|\omega\right|\right\} $.
Using the fact that the inner product of $so\left(N\right)$ is ad-invariant
and the $\omega_{ij}$ ($i\neq j$) have the same norm, we see that
$r$, $s$, $t$ are equal. This completes the proof of the claim.

Next we prove Item (ii). Suppose that $N=3$. By the proof of Item
(i), we have $\left|\left[\omega_{ij},\omega_{jk}\right]\right|=\tilde{c}\left|\omega_{ij}\right|\left|\omega_{jk}\right|$
for distinct indices $i$, $j$, $k$ between $1$ and $3$. Using
a result of Bourguignon-Lawson that describes this equality (Note
2.31 in \cite{BL1981}), we see that $\omega_{ij}\perp\omega_{jk}$
for distinct indices $i$, $j$, $k$ between $1$ and $3$. Using
this and the fact that the form $\omega$ is self-dual, we see that
the triple $\omega_{12}$, $\omega_{13}$, $\omega_{14}$ is an orthogonal
basis of $so\left(3\right)$.

Finally we prove Item (iii). Suppose that $N\geq4$. By the proof
of Item (i), we have $\left|\left[\omega_{12},\omega_{23}\right]\right|=\tilde{c}\left|\omega_{12}\right|\left|\omega_{23}\right|$.
Using a result of Bourguignon-Lawson that describes this equality
(Lemma 2.30 in \cite{BL1981}), we see that there exist constants
$a_{1}$, $a_{2}$, $a_{3}$ and a basis $\boldsymbol{i}$, $\boldsymbol{j}$,
$\boldsymbol{k}$ of $su\left(2\right)$ embedded into $so\left(N\right)$
(see Section 2) such that the pair $\omega_{12}$, $\omega_{23}$
is simultaneously orthogonally equivalent to a pair in the triple
$a_{1}\boldsymbol{i}$, $a_{2}\boldsymbol{j}$, $a_{3}\boldsymbol{k}$.
Using this, the fact that $\omega_{31}$ is a multiple of $\left[\omega_{12},\omega_{23}\right]$
(by the proof of Item (i)) and the fact that the form $\omega$ is
self-dual, we see (changing the constants and the basis if necessary)
that the triple $\omega_{12}$, $\omega_{13}$, $\omega_{14}$ is
simultaneously orthogonally equivalent to the triple $a_{1}\boldsymbol{i}$,
$a_{2}\boldsymbol{j}$, $a_{3}\boldsymbol{k}$.
\end{proof}

\section{Bochner formula}

In this section we prove a Bochner type formula based on Bourguignon-Lawson
\cite{BL1981} and Gursky-Kelleher-Streets \cite{GKS2018}. The description
of equality in the next result is very important in the proof of Theorem
\ref{thm:equality}.
\begin{lem}
\label{lem:bochner}Consider a four-dimensional Riemannian manifold
$X$ with scalar curvature $S$ and Weyl curvature $W$, a Riemannian
vector bundle $V$ on the manifold $X$ with structure group $G\subset O\left(N\right)$
and a Yang-Mills connection $A$ on the bundle $V$ with curvature
$F$, where $N\geq3$. Fix a constant $p>0$. Then
\begin{align*}
\left|F^{+}\right|^{p}\Delta\left|F^{+}\right|^{p}\geq & \left(1-1/\left(2p\right)\right)\left|\nabla\left|F^{+}\right|^{p}\right|^{2}+\left(p/3\right)S\left|F^{+}\right|^{2p}\\
 & -2p\sqrt{2/3}\left|W^{+}\right|\left|F^{+}\right|^{2p}-p\gamma\left|F^{+}\right|^{2p+1}.
\end{align*}
Also, if equality holds (in the above inequality) at a point $x$
in the manifold $X$ with $\left|F^{+}\right|\left(x\right)>0$, then
at this point
\[
\left|\sum_{i,j,k=1}^{4}\left\langle \left[F_{ij}^{+},F_{jk}^{+}\right],F_{ki}^{+}\right\rangle \right|=\gamma\left|F^{+}\right|^{3},
\]
\[
W^{+}=0.
\]

Here we use the inner products
\[
\left|F^{+}\right|^{2}=\sum_{1\leq i<j\leq4}\left|F_{ij}^{+}\right|^{2},\,\,\,\,\,\left\langle B,C\right\rangle _{so\left(V\right)}=-c\cdot tr\left(BC\right),\,\,\,\,\,c>0,
\]
and we denote $W^{+}=1/2\left(W+*W\right)$, $F^{+}=1/2\left(F+*F\right)$
and
\[
\gamma=\begin{cases}
4/\sqrt{12c}, & N=3,\\
4/\sqrt{6c}, & N\geq4.
\end{cases}
\]

The same statement is true replacing $F^{+}$ and $W^{+}$ by $F^{-}$
and $W^{-}$, respectively.
\end{lem}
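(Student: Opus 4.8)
The plan is to derive the inequality from the Bochner--Weitzenb\"ock formula for the Yang--Mills field, together with three inputs: the algebraic inequality of Lemma~\ref{lem:linearalgebra}, the operator norm bound for $W^{+}$ on self-dual two-forms, and the refined Kato inequality for $F^{+}$. I work on the open set $\{|F^{+}|>0\}$, where $f:=|F^{+}|$ is smooth (this suffices for the applications). Since $A$ is Yang--Mills, $d_{A}^{*}F=0$, and the second Bianchi identity gives $d_{A}F=0$, so $F$ is $d_{A}$-harmonic. Specializing the Weitzenb\"ock formula for $so(V)$-valued two-forms to the self-dual part --- the Hodge Laplacian and the Weitzenb\"ock curvature commute with $*$ and hence preserve $\Lambda^{+}$ --- and using the computation of Bourguignon--Lawson \cite{BL1981} that reduces the curvature-of-$A$ contribution, after pairing with $F^{+}$, to a cyclic sum over $F^{+}$ alone, one gets
\[
\langle\Delta_{A}F^{+},F^{+}\rangle=\tfrac{S}{3}|F^{+}|^{2}-2\langle W^{+}(F^{+}),F^{+}\rangle+\sum_{i,j,k=1}^{4}\langle[F_{ij}^{+},F_{jk}^{+}],F_{ki}^{+}\rangle ,
\]
where $\Delta_{A}=\operatorname{tr}_{g}\nabla_{A}^{2}$. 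Combined with the elementary identity $\tfrac12\Delta|F^{+}|^{2}=|\nabla_{A}F^{+}|^{2}+\langle\Delta_{A}F^{+},F^{+}\rangle$ this yields $f\Delta f=|\nabla_{A}F^{+}|^{2}-|\nabla f|^{2}+\langle\Delta_{A}F^{+},F^{+}\rangle$.

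Next I estimate the terms on the right. Since $W^{+}$ is a trace-free symmetric endomorphism of the three-dimensional space $\Lambda^{+}$, its operator norm satisfies $\|W^{+}\|_{\mathrm{op}}\le\sqrt{2/3}\,|W^{+}|$, hence $|\langle W^{+}(F^{+}),F^{+}\rangle|\le\sqrt{2/3}\,|W^{+}|f^{2}$. By Lemma~\ref{lem:linearalgebra} applied to the self-dual form $F^{+}$, $\bigl|\sum_{i,j,k}\langle[F_{ij}^{+},F_{jk}^{+}],F_{ki}^{+}\rangle\bigr|\le\gamma f^{3}$. For the gradient term I use the refined Kato inequality $|\nabla_{A}F^{+}|^{2}\ge\tfrac32|\nabla f|^{2}$, valid for the self-dual part of the curvature of a Yang--Mills connection (a consequence of the pointwise constraints that the Bianchi and Yang--Mills equations impose on $\nabla_{A}F$; see \cite{BL1981}, \cite{GKS2018}). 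Substituting gives $f\Delta f\ge\tfrac12|\nabla f|^{2}+\tfrac{S}{3}f^{2}-2\sqrt{2/3}\,|W^{+}|f^{2}-\gamma f^{3}$.

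Passing to the $p$-th power, set $w:=f^{p}=|F^{+}|^{p}$, so that $w\Delta w=pf^{2p-2}(f\Delta f)+p(p-1)f^{2p-2}|\nabla f|^{2}$ and $|\nabla w|^{2}=p^{2}f^{2p-2}|\nabla f|^{2}$. Inserting the bound just obtained and collecting the $|\nabla f|^{2}$ terms produces exactly the coefficient $\frac{p-1/2}{p}=1-\frac{1}{2p}$ in front of $|\nabla w|^{2}$, which is the asserted inequality. For the equality statement, suppose equality holds at $x$ with $f(x)>0$; then equality holds in each estimate above. Equality in Lemma~\ref{lem:linearalgebra} gives $|\sum_{i,j,k}\langle[F_{ij}^{+},F_{jk}^{+}],F_{ki}^{+}\rangle|=\gamma|F^{+}|^{3}$ and, by its equality case, that $F_{12}^{+},F_{13}^{+},F_{14}^{+}$ are mutually orthogonal in $so(V)$ with common norm $\tfrac1{\sqrt6}|F^{+}|$; expanding $F^{+}=\sqrt2\,(\eta_{1}\otimes F_{12}^{+}+\eta_{2}\otimes F_{13}^{+}+\eta_{3}\otimes F_{14}^{+})$ in an orthonormal basis $\{\eta_{a}\}$ of $\Lambda^{+}$ then forces $\langle W^{+}(F^{+}),F^{+}\rangle=2\bigl(\tfrac1{\sqrt6}|F^{+}|\bigr)^{2}\operatorname{tr}(W^{+})=0$. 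On the other hand, equality in the $W^{+}$ estimate requires $\langle W^{+}(F^{+}),F^{+}\rangle=\sqrt{2/3}\,|W^{+}||F^{+}|^{2}$; comparing the two, $|W^{+}|=0$. The anti-self-dual case is identical with $*\to-*$.

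The least routine points, which I expect to be the main obstacles, are: establishing the refined Kato constant $2/3$ for $F^{+}$, which is not individually $d_{A}$-harmonic and so requires the full algebraic content of the Bianchi and Yang--Mills equations on $\nabla_{A}F$, together with keeping track that it is precisely this constant that matches the coefficient $1-\frac{1}{2p}$; the Bourguignon--Lawson reduction of the curvature-of-$A$ term in the Weitzenb\"ock formula to the cyclic sum over $F^{+}$ with the correct normalization, so that the constant $\gamma$ of Lemma~\ref{lem:linearalgebra} enters exactly as stated; and the observation in the equality analysis that the rigidity of $F^{+}$ supplied by Lemma~\ref{lem:linearalgebra} makes the Weyl term $\langle W^{+}(F^{+}),F^{+}\rangle$ vanish by trace-freeness, which is what upgrades ``equality'' to ``$W^{+}=0$''.
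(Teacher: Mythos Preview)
Your argument is correct and follows essentially the same route as the paper: the Bourguignon--Lawson Weitzenb\"ock identity for $F^{+}$, the eigenvalue bound $\lambda_{\max}(W^{+})\le\sqrt{2/3}\,|W^{+}|$, Lemma~\ref{lem:linearalgebra}, and the refined Kato inequality $|\nabla_{A}F^{+}|^{2}\ge\tfrac32|\nabla|F^{+}||^{2}$ from \cite{GKS2018}, followed by the power substitution $w=|F^{+}|^{p}$. One small correction: $F^{+}$ \emph{is} individually $d_{A}$-harmonic (since $d_{A}F=0$ and $d_{A}^{*}F=0$ imply $d_{A}(*F)=0$ and $d_{A}^{*}(*F)=0$), so your worry in the final paragraph is unfounded; the Kato constant $3/2$ is exactly the refined constant for a closed self-dual bundle-valued $2$-form in dimension four. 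In the equality case your use of the orthogonality of $F_{12}^{+},F_{13}^{+},F_{14}^{+}$ (from parts (ii)--(iii) of Lemma~\ref{lem:linearalgebra}) to show $\langle W^{+}(F^{+}),F^{+}\rangle=\tfrac{|F^{+}|^{2}}{3}\operatorname{tr}W^{+}=0$ is a clean variant of the paper's argument, which instead rotates the frame on $\Lambda^{+}$ to diagonalize $W^{+}$ and uses only that all $|F_{ij}^{+}|$ are equal; both reach $W^{+}=0$ in the same way.
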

\begin{proof}
We prove the result for $F^{+}$. The proof for $F^{-}$ is similar.

First we find an equation for the rough Laplacian of certain Hodge-harmonic
forms. Consider a self-dual Hodge-harmonic two-form $\omega$ on the
manifold $X$ with values in the bundle $so\left(V\right)$. Using
the fact that the form $\omega$ is Hodge-harmonic ($d\omega=0$ and
$d^{*}\omega=0$) and a Bochner type formula of Bourguignon-Lawson
(Theorem 3.10 in \cite{BL1981}), we have
\[
\left(\Delta\omega\right)_{ij}=-\sum_{k}R_{ik}\omega_{jk}+\sum_{k}R_{jk}\omega_{ik}-\sum_{k,l}R_{ijkl}\omega_{kl}+\sum_{k}\left[F_{ik},\omega_{jk}\right]-\sum_{k}\left[F_{jk},\omega_{ik}\right].
\]
Using the formula
\[
R_{ijkl}=W_{ijkl}+\left(1/2\right)\left(R_{ik}g_{jl}-R_{il}g_{jk}+g_{ik}R_{jl}-g_{il}R_{jk}\right)-\left(1/6\right)S\left(g_{ik}g_{jl}-g_{il}g_{jk}\right),
\]
and substituting the Riemann curvature $R_{ijkl}$ into the above
equation, we have
\[
\left(\Delta\omega\right)_{ij}=\left(1/3\right)S\omega_{ij}-\sum_{k,l}W_{ijkl}\omega_{kl}+\sum_{k}\left[F_{ik},\omega_{jk}\right]-\sum_{k}\left[F_{jk},\omega_{ik}\right].
\]
Taking the inner product with the form $\omega$ and renaming the
indices, we have
\[
\left\langle \Delta\omega,\omega\right\rangle =\left(1/3\right)S\left|\omega\right|^{2}-2\sum_{\left(i<j\right),\left(k<l\right)}W_{ijkl}\left\langle \omega_{ij},\omega_{kl}\right\rangle +\sum_{i,j,k}\left\langle \left[F_{ik},\omega_{jk}\right],\omega_{ij}\right\rangle .
\]
Seeing $W^{+}$ as a self-adjoint operator on $\Lambda_{+}^{2}$ and
using the fact that the form $\omega$ is self-dual ($*\omega=\omega$),
we conclude
\[
\left\langle \Delta\omega,\omega\right\rangle =\left(1/3\right)S\left|\omega\right|^{2}-2\sum_{\left(i<j\right),\left(k<l\right)}W_{ijkl}^{+}\left\langle \omega_{ij},\omega_{kl}\right\rangle +\sum_{i,j,k}\left\langle \left[F_{ik}^{+},\omega_{jk}\right],\omega_{ij}\right\rangle .
\]

Next we substitute $F^{+}$ into this equation. Using the Bianchi
identity and the fact that the connection $A$ is Yang-Mills, we see
that the curvature $F$ is Hodge-harmonic, so we deduce that $F^{+}$
is Hodge-harmonic ($dF^{+}=0$ and $d^{*}F^{+}=0$). We also see that
$F^{+}$ is self-dual ($*F^{+}=F^{+}$). Substituting $\omega=F^{+}$
into the above equation and renaming the indices in the last term,
we conclude
\[
\left\langle \Delta F^{+},F^{+}\right\rangle =\left(1/3\right)S\left|F^{+}\right|^{2}-2\sum_{\left(i<j\right),\left(k<l\right)}W_{ijkl}^{+}\left\langle F_{ij}^{+},F_{kl}^{+}\right\rangle +\sum_{i,j,k}\left\langle \left[F_{ij}^{+},F_{jk}^{+}\right],F_{ki}^{+}\right\rangle .
\]

Next we apply inequalities to this equation. Using the fact that $W^{+}$
is a trace-free self-adjoint operator on $\Lambda_{+}^{2}$ and the
inequality $3c^{2}\leq2\left(a^{2}+b^{2}+c^{2}\right)$ for $a+b+c=0$,
we have
\begin{align*}
\sum_{\left(i<j\right),\left(k<l\right)}W_{ijkl}^{+}\left\langle F_{ij}^{+},F_{kl}^{+}\right\rangle  & \leq\lambda_{max}\left(W^{+}\right)\left|F^{+}\right|^{2}\\
 & \leq\sqrt{2/3}\left|W^{+}\right|\left|F^{+}\right|^{2}.
\end{align*}
Using Lemma \ref{lem:linearalgebra}, we have
\[
\left|\sum_{i,j,k}\left\langle \left[F_{ij}^{+},F_{jk}^{+}\right],F_{ki}^{+}\right\rangle \right|\leq\gamma\left|F^{+}\right|^{3}.
\]
Using a Kato inequality of Gursky-Kelleher-Streets (Proposition 2.8
in \cite{GKS2018}), we have
\[
\left|\nabla F^{+}\right|^{2}\geq\left(3/2\right)\left|\nabla\left|F^{+}\right|\right|^{2}.
\]
Applying these three inequalities to the above equation, we get
\begin{align*}
\left|F^{+}\right|\Delta\left|F^{+}\right| & =\left\langle \Delta F^{+},F^{+}\right\rangle +\left|\nabla F^{+}\right|^{2}-\left|\nabla\left|F^{+}\right|\right|^{2}\\
 & \geq\left(1/2\right)\left|\nabla\left|F^{+}\right|\right|^{2}+\left(1/3\right)S\left|F^{+}\right|^{2}-2\sqrt{2/3}\left|W^{+}\right|\left|F^{+}\right|^{2}-\gamma\left|F^{+}\right|^{3}.
\end{align*}
By a simple calculation, we conclude
\begin{align*}
\left|F^{+}\right|^{p}\Delta\left|F^{+}\right|^{p}\geq & \left(1-1/\left(2p\right)\right)\left|\nabla\left|F^{+}\right|^{p}\right|^{2}+\left(p/3\right)S\left|F^{+}\right|^{2p}\\
 & -2p\sqrt{2/3}\left|W^{+}\right|\left|F^{+}\right|^{2p}-p\gamma\left|F^{+}\right|^{2p+1}.
\end{align*}

Finally suppose that equality holds in the above inequality at a point
$x$ in the manifold $X$ with $\left|F^{+}\right|\left(x\right)>0$.
In this case, we see that all the above inequalities become equalities
at this point, so at the point $x$ we have
\[
\left|\sum_{i,j,k}\left\langle \left[F_{ij}^{+},F_{jk}^{+}\right],F_{ki}^{+}\right\rangle \right|=\gamma\left|F^{+}\right|^{3},
\]
\[
\lambda_{max}\left(W^{+}\right)\left|F^{+}\right|^{2}=\sum_{\left(i<j\right),\left(k<l\right)}W_{ijkl}^{+}\left\langle F_{ij}^{+},F_{kl}^{+}\right\rangle .
\]
Using the first equation and Lemma \ref{lem:linearalgebra}, we see
that at the point $x$ the $F_{ij}^{+}$ ($i\neq j$) have the same
norm. Diagonalizing the trace-free self-adjoint operator $W^{+}$
in the second equation and using the fact that the $F_{ij}^{+}$ ($i\neq j$)
have the same norm, we see that at the point $x$ we have
\[
\lambda_{max}\left(W^{+}\right)\left|F^{+}\right|^{2}=0,
\]
so we get $\lambda_{max}\left(W^{+}\right)\left(x\right)=0$, which
implies $W^{+}\left(x\right)=0$.
\end{proof}

\section{Proof of Theorem \ref{thm:inequality} and Theorem \ref{thm:equality}}

In this section we prove Theorem \ref{thm:inequality} and Theorem
\ref{thm:equality} and we apply these results to the sphere $S^{4}$,
the Euclidean space $R^{4}$ and the cylinder $S^{3}\times R$. The
key ideas in the proof of Theorem \ref{thm:equality} are to show
that the function $I$ is identically zero and to use the description
of equality in Lemma \ref{lem:linearalgebra} and Lemma \ref{lem:bochner}.

First we prove Theorem \ref{thm:inequality}.
\begin{proof}
Take the function $I$ on the manifold $X$ given by
\[
I=\left|F^{+}\right|^{1/2}\Delta\left|F^{+}\right|^{1/2}-c_{1}S\left|F^{+}\right|+c_{2}\left|W^{+}\right|\left|F^{+}\right|+c_{3}\left|F^{+}\right|^{2},
\]
where $c_{1}=1/6$, $c_{2}=\sqrt{2/3}$ and $c_{3}=\gamma/2$. Using
Lemma \ref{lem:bochner}, we see that this function is nonnegative,
that is
\[
I\geq0.
\]
Take the cutoff function $\phi$ on the manifold $X$ given by
\[
\phi=\begin{cases}
1 & B_{R},\\
2-r/R & B_{2R}\setminus B_{R},\\
0 & X\setminus B_{2R},
\end{cases}
\]
where $r=dist_{X}\left(\cdot,x_{0}\right)$ and $B_{R}=\left\{ r<R\right\} $.
Multiplying the function $I$ by the function $\phi^{2}$ and integrating
by parts, we have
\begin{align*}
\int I\phi^{2}= & -\int\left|\nabla\left|F^{+}\right|^{1/2}\right|^{2}\phi^{2}-2\int\left|F^{+}\right|^{1/2}\phi\left\langle \nabla\left|F^{+}\right|^{1/2},\nabla\phi\right\rangle \\
 & -c_{1}\int S\left|F^{+}\right|\phi^{2}+c_{2}\int\left|W^{+}\right|\left|F^{+}\right|\phi^{2}+c_{3}\int\left|F^{+}\right|^{2}\phi^{2}.
\end{align*}
On the other hand, substituting the compactly supported function $\left|F^{+}\right|^{1/2}\phi$
into the inequality determined by the Yamabe constant $C_{Y}$, we
have
\begin{align*}
c_{4}\left(\int\left|F^{+}\right|^{2}\phi^{4}\right)^{1/2}\leq & \int\left|\nabla\left|F^{+}\right|^{1/2}\right|^{2}\phi^{2}+2\int\left|F^{+}\right|^{1/2}\phi\left\langle \nabla\left|F^{+}\right|^{1/2},\nabla\phi\right\rangle \\
 & +\int\left|F^{+}\right|\left|\nabla\phi\right|^{2}+c_{1}\int S\left|F^{+}\right|\phi^{2},
\end{align*}
where $c_{4}=C_{Y}/6$. Summing this inequality and the above equation,
we get
\begin{align*}
\int I\phi^{2}+c_{4}\left(\int\left|F^{+}\right|^{2}\phi^{4}\right)^{1/2} & \leq c_{2}\int\left|W^{+}\right|\left|F^{+}\right|\phi^{2}+c_{3}\int\left|F^{+}\right|^{2}\phi^{2}\\
 & +\int\left|F^{+}\right|\left|\nabla\phi\right|^{2}.
\end{align*}
We can assume that $F^{+}$ and $W^{+}$ are in $L^{2}$, otherwise
the conclusion of the theorem is trivial. Using Holder's inequality,
we have
\[
\int I\phi^{2}+\left(c_{4}-c_{2}\left|W^{+}\right|_{L^{2}}-c_{3}\left|F^{+}\right|_{L^{2}}\right)\left(\int\left|F^{+}\right|^{2}\phi^{4}\right)^{1/2}\leq\int\left|F^{+}\right|\left|\nabla\phi\right|^{2}.
\]
Using Holder's inequality and the fact that volume growth is at most
Euclidean ($vol\left(B_{R}\right)\leq const\cdot R^{4}$), we have
\begin{align*}
 & \int I\phi^{2}+\left(c_{4}-c_{2}\left|W^{+}\right|_{L^{2}}-c_{3}\left|F^{+}\right|_{L^{2}}\right)\left(\int\left|F^{+}\right|^{2}\phi^{4}\right)^{1/2}\\
 & \leq R^{-2}\left(\int_{B_{2R}\setminus B_{R}}\left|F^{+}\right|^{2}\right)^{1/2}\left(vol\left(B_{2R}\right)\right)^{1/2}\\
 & \leq const\left(\int_{B_{2R}\setminus B_{R}}\left|F^{+}\right|^{2}\right)^{1/2}.
\end{align*}
Taking the limit as the radius $R\to\infty$ and using the fact that
the function $I$ is nonnegative, we get
\[
\left(c_{4}-c_{2}\left|W^{+}\right|_{L^{2}}-c_{3}\left|F^{+}\right|_{L^{2}}\right)\left(\int\left|F^{+}\right|^{2}\right)^{1/2}\leq0.
\]
Since $F^{+}$ is not identically zero, we conclude
\[
c_{4}\leq c_{2}\left|W^{+}\right|_{L^{2}}+c_{3}\left|F^{+}\right|_{L^{2}}.
\]
\end{proof}
Next we prove Theorem \ref{thm:equality}.
\begin{proof}
By the proof of Theorem \ref{thm:inequality}, we have
\[
\int I\phi^{2}+\left(c_{4}-c_{3}\left|F^{+}\right|_{L^{2}}\right)\left(\int\left|F^{+}\right|^{2}\phi^{4}\right)^{1/2}\leq const\left(\int_{B_{2R}\setminus B_{R}}\left|F^{+}\right|^{2}\right)^{1/2}.
\]
Since $c_{4}-c_{3}\left|F^{+}\right|_{L^{2}}=0$, we have
\[
\int I\phi^{2}\leq const\left(\int_{B_{2R}\setminus B_{R}}\left|F^{+}\right|^{2}\right)^{1/2}.
\]
Taking the limit as the radius $R\to\infty$ and using the fact that
the function $I$ is nonnegative (by the proof of Theorem \ref{thm:inequality}),
we see that the function $I$ is identically zero, that is
\[
\left|F^{+}\right|^{1/2}\Delta\left|F^{+}\right|^{1/2}=c_{1}S\left|F^{+}\right|-c_{3}\left|F^{+}\right|^{2}.
\]
Using Lemma \ref{lem:bochner}, we have
\[
\left|\sum_{i,j,k=1}^{4}\left\langle \left[F_{ij}^{+},F_{jk}^{+}\right],F_{ki}^{+}\right\rangle \right|=\gamma\left|F^{+}\right|^{3}.
\]
Take normal coordinates at a point in the manifold $X$. Using Lemma
\ref{lem:linearalgebra}, we conclude that at this point the $F_{ij}^{+}$
($i\neq j$) have the same norm and the triple $F_{12}^{+}$, $F_{13}^{+}$,
$F_{14}^{+}$ is simultaneously orthogonally equivalent to a triple
$a_{1}\boldsymbol{i}$, $a_{2}\boldsymbol{j}$, $a_{3}\boldsymbol{k}$
of multiples of a basis of $su\left(2\right)$ embedded into $so\left(N\right)$.
\end{proof}
Finally, applying Theorem \ref{thm:inequality} and Theorem \ref{thm:equality}
to the sphere $S^{4}$, the Euclidean space $R^{4}$ and the cylinder
$S^{3}\times R$, we get the following result:
\begin{cor}
Consider the sphere $S^{4}$, the Euclidean space $R^{4}$ and the
cylinder $S^{3}\times R$ with the product metric. Consider a Yang-Mills
connection $A$ with curvature $F$ and structure group $G\subset O\left(N\right)$
on any of these manifolds, where $N\geq4$. If $F^{+}\neq0$, then
\[
\left|F^{+}\right|_{L^{2}}\geq4\omega_{4}^{1/2}\gamma^{-1}.
\]
Here $\omega_{4}=vol\left(S^{4}\right)$. Also, if equality holds
(in the above inequality), then at each point in normal coordinates
the triple $F_{12}^{+}$, $F_{13}^{+}$, $F_{14}^{+}$ is simultaneously
orthogonally equivalent to a triple $a_{1}\boldsymbol{i}$, $a_{2}\boldsymbol{j}$,
$a_{3}\boldsymbol{k}$ of multiples of a basis of $su\left(2\right)$
embedded into $so\left(N\right)$, and the absolute values of $a_{1}$,
$a_{2}$, $a_{3}$ are equal. The same statement is true replacing
$F^{+}$ by $F^{-}$.
\end{cor}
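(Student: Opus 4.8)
The plan is to check, one manifold at a time, that $S^{4}$, $R^{4}$ and $S^{3}\times R$ satisfy the three hypotheses shared by Theorem \ref{thm:inequality} and Theorem \ref{thm:equality} --- that is, $vol\left(B_{R}\right)\leq const\cdot R^{4}$, positive Yamabe constant $C_{Y}>0$, and $W^{+}=0$ (hence also $W^{-}=0$) --- with the common value $C_{Y}=12\,\omega_{4}^{1/2}$, and then to substitute directly. Once $W^{+}=0$ is known, Theorem \ref{thm:inequality} reduces to $3\gamma\left|F^{+}\right|_{L^{2}}\geq C_{Y}=12\,\omega_{4}^{1/2}$, which is exactly $\left|F^{+}\right|_{L^{2}}\geq4\,\omega_{4}^{1/2}\gamma^{-1}$; and if equality holds, then $3\gamma\left|F^{+}\right|_{L^{2}}=C_{Y}$ together with $W^{+}=0$, so Theorem \ref{thm:equality} applies verbatim --- its hypothesis $N\geq4$ is precisely that of the corollary --- and yields the pointwise orthogonal equivalence of $F_{12}^{+},F_{13}^{+},F_{14}^{+}$ to a triple $a_{1}\boldsymbol{i},a_{2}\boldsymbol{j},a_{3}\boldsymbol{k}$ with $\left|a_{1}\right|=\left|a_{2}\right|=\left|a_{3}\right|$. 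The statements for $F^{-}$ follow in the same way.

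For the volume growth: $S^{4}$ is compact, so the bound is automatic; in $R^{4}$ one has $vol\left(B_{R}\right)=\left(\pi^{2}/2\right)R^{4}$; and in $S^{3}\times R$ one has $vol\left(B_{R}\right)\leq vol\left(S^{3}\right)\cdot2R$, which is $\leq const\cdot R^{4}$ for $R\geq1$ and bounded for $R\leq1$. For the Weyl curvature: $S^{4}$ and $R^{4}$ have constant sectional curvature, so $W=0$; and $S^{3}\times R$ is locally conformally flat --- it is globally conformal to $R^{4}\setminus\left\{0\right\}$ --- so again $W=0$; in particular $W^{\pm}=0$ on all three.

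The substantive step is the identity $C_{Y}=12\,\omega_{4}^{1/2}$. On the round $S^{4}$ the infimum defining $C_{Y}$ is attained by the constant functions, giving $C_{Y}=12\,vol\left(S^{4}\right)^{1/2}=12\,\omega_{4}^{1/2}$, the classical value of the Yamabe invariant of the round sphere. On flat $R^{4}$, where $S\equiv0$, the number $C_{Y}$ equals $6$ times the best constant in the Sobolev inequality $\left\|\nabla\phi\right\|_{L^{2}}^{2}\geq const\cdot\left\|\phi\right\|_{L^{4}}^{2}$, and the sharp form of that inequality (Talenti, Aubin) again gives $C_{Y}=12\,\omega_{4}^{1/2}$. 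Finally $S^{3}\times R$ is conformal to $S^{4}$ with two points removed; since the Yamabe quotient is a conformal invariant and a finite set of points has zero $2$-capacity in dimension four --- so compactly supported test functions on $S^{4}$ minus two points are $W^{1,2}$-dense among those on $S^{4}$ --- one gets $C_{Y}\left(S^{3}\times R\right)=C_{Y}\left(S^{4}\right)=12\,\omega_{4}^{1/2}$. In each case $C_{Y}>0$, as needed, and the corollary then follows by the substitution of the first paragraph. The only point that is not entirely routine is this evaluation of $C_{Y}$ for the two noncompact examples, and even there the ingredients are classical, so I do not expect a real obstacle.
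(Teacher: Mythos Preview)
Your proof is correct and follows the same overall plan as the paper: verify the Euclidean volume growth, the vanishing of $W$, and the value $C_{Y}=12\,\omega_{4}^{1/2}$ for each of the three manifolds, then plug into Theorems~\ref{thm:inequality} and~\ref{thm:equality}. The one substantive difference is your computation of $C_{Y}$ for the cylinder $S^{3}\times R$: the paper simply cites a result of Ammann--Dahl--Humbert, whereas you argue directly via the conformal equivalence of $S^{3}\times R$ with $S^{4}$ minus two points together with the zero $2$-capacity of finite point sets in dimension four. Your route is more self-contained and avoids an external reference; the paper's citation is shorter but black-boxes the computation. Either way the conclusion is the same.
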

\begin{proof}
For the sphere $S^{4}$, the Euclidean space $R^{4}$ and the cylinder
$S^{3}\times R$ we see that the Weyl curvature is zero ($W=0$) and
the volume growth is at most Euclidean ($vol\left(B_{R}\right)\leq const\cdot R^{4}$).

For the sphere $S^{4}$ and the Euclidean space $R^{4}$, using results
of Aubin \cite{A1976a}-\cite{A1976b} and Talenti \cite{T1976},
we see that the Yamabe constant is $C_{Y}=12\omega_{4}^{1/2}$.

For the cylinder $S^{3}\times R$, using a result of Ammann-Dahl-Humbert
(Lemma 3.7 with $k=0$ and Equation 4 in \cite{ADH2013}), we see
that the Yamabe constant is also $C_{Y}=12\omega_{4}^{1/2}$.
\end{proof}
We would like to thank Daniel Fadel, Almir Santos and Paul Feehan
for the support.

\lyxaddress{Departamento de Matemática, Universidade Federal do Espírito Santo,
Vitória, ES, Brazil. Email address: matheus.vieira@ufes.br}
\end{document}